\newtheorem{theorem}{Theorem}
\newtheorem{lemma}[theorem]{Lemma}
\newtheorem{proposition}[theorem]{Proposition}
\newcommand\TT{{\cal T}}
\newcommand\GG{{\cal G}}
\newcommand\JJ{{\cal J}}
\newcommand\NN{{\mathbb N}}
\newcommand\dd{\mbox{d}}
\DeclareMathOperator{\sgn}{sgn}
\begin{document}
\title{The dimension of the region of feasible tournament profiles}

\author{Daniel Kr{\'a}l'\thanks{Faculty of Informatics, Masaryk University, Botanick\'a 68A, 602 00 Brno, Czech Republic. E-mail: {\tt \{dkral,540987\}@fi.muni.cz}. Supported by the MUNI Award in Science and Humanities (MUNI/I/1677/2018) of the Grant Agency of Masaryk University.}\and
        \newcounter{lth}
	\setcounter{lth}{1}
	Ander Lamaison\thanks{Institute for Basic Science, 55 Expo-ro, Yuseong-gu, 34126 Daejeon, South Korea. E-mail: {\tt ander@ibs.re.kr}. Previous affiliation: Faculty of Informatics, Masaryk University, Botanick\'a 68A, 602 00 Brno, Czech Republic. This author was also supported by the MUNI Award in Science and Humanities (MUNI/I/1677/2018).}\and
	Magdalena Prorok\thanks{AGH University of Krakow, al.~Mickiewicza 30, 30-059 Krak\'ow, Poland. E-mail: {\tt prorok@agh.edu.pl}.}\and
	Xichao Shu$^\fnsymbol{lth}$}

\date{}

\maketitle

\begin{abstract}
Erd\H os, Lov\'asz and Spencer showed in the late 1970s that
the dimension of the region of $k$-vertex graph profiles,
i.e., the region of feasible densities of $k$-vertex graphs in large graphs,
is equal to the number of non-trivial connected graphs with at most $k$ vertices.
We determine the dimension of the region of $k$-vertex tournament profiles.
Our result, which explores an interesting connection to Lyndon words,
yields that the dimension is much larger than just the number of strongly connected tournaments,
which would be the answer expected as the analogy to the setting of graphs.
\end{abstract}

\section{Introduction}
\label{sec:intro}

Understanding the interplay between substructures of combinatorial objects
underlies a vast number of problems in extremal combinatorics.
One of many examples of this phenomenon is
the famous Erd\H os-Rademacher problem,
which concerns determining the minimum number of triangles in a graph with a given edge density.
It took more than five decades until this challenging problem was fully resolved by Razborov~\cite{Raz08} as
one of the first applications of his flag algebra method introduced in~\cite{Raz07};
also see~\cite{LiuPS17,PikR17,Rei16} for additional related results. 
The resolution of the Erd\H os-Rademacher problem
determined the region of feasible densities of $K_2$ and $K_3$ in large graphs (profiles of $K_2$ and $K_3$).
In this paper, we are interested in the following basic question:
what is the dimension of the region of feasible substructure densities?
Our main result is determining the dimension of the region of $k$-vertex tournament profiles.

The dimension of the region of $k$-vertex graph profiles,
i.e., the region of feasible densities of $k$-vertex graphs in large graphs,
was determined by Erd\H os, Lov\'asz and Spencer~\cite{ErdLS79} in the late 1970s.
They showed that the dimension of this region
is equal to the number of non-trivial connected graphs with at most $k$ vertices (when we say that the region has dimension $d$,
we mean that the region is a $d$-dimensional manifold with boundary).
We next state their result formally using the language of combinatorial limits (we refer
the reader to Subsection~\ref{subsec:lim} for definitions as needed),
which we use throughout the paper;
we write $\GG_k$ for the set of all graphs with at most $k$ vertices,
$\GG^C_k$ for the set of all non-trivial connected graphs with at most $k$ vertices,
i.e., connected graphs with at least two and at most $k$ vertices, and
$B_{\varepsilon}(x)$ for the $\varepsilon$-ball (in the Euclidean metric) with the center at a point $x$.
We remark that we use $v_{z\in Z}$ as a shorthand notation for a vector indexed by elements of $Z$.
\begin{theorem}[{Erd\H os, Lov\'asz and Spencer~\cite{ErdLS79}}]
\label{thm:graph}
For every $k\ge 2$,
there exist $x_0\in [0,1]^{\GG^C_k}$ and $\varepsilon>0$ such that
for every $x\in B_{\varepsilon}(x_0)\subseteq [0,1]^{\GG^C_k}$,
there exists a graphon $W$ such that \[t(G,W)_{G\in\GG^C_k}=x,\]
i.e., $t(G,W)=x_G$ for every $G\in\GG^C_k$.
Moreover,
there exists a polynomial function $f:[0,1]^{\GG^C_k}\to [0,1]^{\GG_k}$ such that
the following holds for every graphon $W$:
\[f(t(G,W)_{G\in\GG^C_k})=t(G,W)_{G\in\GG_k}.\]
\end{theorem}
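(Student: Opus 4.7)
The plan is to handle the two conclusions of the theorem separately: the existence of the polynomial $f$ is a formal consequence of how homomorphism densities interact with disconnected graphs, while the existence of the ball $B_\varepsilon(x_0)$ in the image is the substantive part, which I would establish by exhibiting a parameterized family of graphons whose densities vary in $|\GG^C_k|$ independent directions and applying the inverse function theorem.

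For the polynomial $f$, I would invoke multiplicativity of the homomorphism density under disjoint union: if $G$ has connected components $G_1,\dots,G_r$ then $t(G,W)=\prod_{i=1}^r t(G_i,W)$. For $G\in\GG_k$ every component has at most $k$ vertices, so each $G_i$ is either the trivial graph $K_1$ (with $t(K_1,W)=1$) or an element of $\GG^C_k$. The multiset of nontrivial components of $G$ therefore defines a monomial in the coordinates $(t(H,W))_{H\in\GG^C_k}$ equal to $t(G,W)$; assembling these monomials over all $G\in\GG_k$ gives the desired polynomial $f$.

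For the open ball, enumerate $\GG^C_k=\{H_1,\dots,H_m\}$ and fix auxiliary graphons $U_1,\dots,U_m$, to be chosen below. For $\lambda=(\lambda_1,\dots,\lambda_m)$ with $\lambda_i>0$ and $\sum_i\lambda_i\le 1$, let $W_\lambda$ be the block-diagonal graphon obtained by placing a rescaled copy of $U_i$ on an interval $I_i\subseteq[0,1]$ of length $\lambda_i$ and setting $W_\lambda\equiv 0$ off these diagonal blocks. Since every $H\in\GG^C_k$ is connected, all vertices of any homomorphism from $H$ into $W_\lambda$ must land in a single block, so
\[t(H,W_\lambda)=\sum_{i=1}^m \lambda_i^{v(H)}\,t(H,U_i).\]
The Jacobian of $\Phi\colon\lambda\mapsto (t(H,W_\lambda))_{H\in\GG^C_k}$ has entries $\partial t(H_i,W_\lambda)/\partial\lambda_j=v(H_i)\,\lambda_j^{v(H_i)-1}\,t(H_i,U_j)$. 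Evaluating at $\lambda^*=(\alpha,\dots,\alpha)$ for a fixed small $\alpha>0$ and factoring the nonzero scalars $v(H_i)\,\alpha^{v(H_i)-1}$ out of each row reduces nonsingularity of the Jacobian to that of the density matrix $T=(t(H_i,U_j))_{i,j\in[m]}$; the inverse function theorem then produces the required ball, with $x_0=\Phi(\lambda^*)$.

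The main obstacle is therefore to exhibit graphons $U_1,\dots,U_m$ making $T$ invertible, or equivalently to verify that the functionals $\{t(H,\cdot):H\in\GG^C_k\}$ are linearly independent on the space of graphons. This linear independence is a classical fact of graph limit theory (it underlies Lov\'asz's theorem that a graph is determined up to isomorphism by its homomorphism numbers), and once it is in hand any $m$ graphons in general position give an invertible $T$. I expect establishing a clean self-contained version of this linear independence, for instance by ordering $\GG^C_k$ by vertex count and using a triangular-matrix argument on suitable blow-ups or perturbations of the $H_j$, to be where the bulk of the technical work lies.
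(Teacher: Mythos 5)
Your first half is fine: multiplicativity of $t(\cdot,W)$ over connected components is exactly how the polynomial $f$ is obtained, and your reduction of the first half is structurally sound as well — the block-diagonal family $W_\lambda$, the identity $t(H,W_\lambda)=\sum_j\lambda_j^{|H|}t(H,U_j)$ for connected $H$, the row-rescaled Jacobian, and the inverse function theorem are all correct, and the existence of $U_1,\dots,U_m$ with $\bigl(t(H_i,U_j)\bigr)_{i,j}$ invertible is indeed equivalent, by a span argument, to linear independence of the functionals $t(H_i,\cdot)$ on graphons. (For calibration: the paper does not prove this theorem at all — it is quoted from Erd\H os–Lov\'asz–Spencer — so the only in-paper comparison is with its analogous tournament argument in Section~\ref{sec:lower}, which has the same architecture: a parameterized blow-up family, a Jacobian, and the inverse function theorem.)

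The genuine gap is the step you yourself flag: you never exhibit graphons $U_1,\dots,U_m$ making the matrix $T=\bigl(t(H_i,U_j)\bigr)$ nonsingular, nor prove the linear independence of the densities of the graphs in $\GG^C_k$; you call it classical and say the bulk of the work lies there. But that nonsingularity \emph{is} the substantive content of the theorem — everything surrounding it in your argument is formal — and it is not closable by a generic or one-line choice. For instance, constant graphons $U_j\equiv p_j$ give $t(H_i,U_j)=p_j^{e(H_i)}$, so any two connected graphs with the same number of edges (the path and the star on four vertices already) produce proportional rows, and more generally any family of $U_j$ must separate graphs that agree on many simple statistics; establishing this is essentially the Erd\H os–Lov\'asz–Spencer independence argument itself (or the linear-independence-of-homomorphism-densities lemma from graph limit theory, which also requires proof). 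Compare with how the paper handles the tournament analogue in Theorem~\ref{thm:lowth}: rather than outsourcing nonsingularity, it makes a concrete blow-up construction and proves, via the shuffle-product/Lyndon-word combinatorics of Lemmas~\ref{lm:lem7} and~\ref{lm:decomp}, that the Jacobian determinant contains an explicitly identified monomial with nonzero coefficient. To complete your proof you would need the graph-side counterpart of exactly that computation (e.g., a vertex-weighted blow-up of $H_1\oplus\cdots\oplus H_m$-type construction in which the multilinear monomial of the determinant can be isolated), and that missing computation is where the theorem actually lives.
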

\noindent Theorem~\ref{thm:graph} asserts that
the densities of non-trivial connected graphs are independent,
i.e, none of these densities is a function of the remaining ones, and
the density of any other graph is a polynomial function of the densities of connected graphs.
In particular,
the dimension of the region of feasible densities of graphs with at most $k$ vertices
is equal the number of non-trivial connected graphs with $k$ vertices.
Since densities of $k$-vertex graphs determine densities of all graphs with at most $k$ vertices,
the dimensions of the region of feasible densities of \emph{graphs with at most $k$ vertices} and
the region of feasible densities of \emph{graphs with exactly $k$ vertices} ($k$-vertex graph profiles) are the same.
Hence, the dimension of the region of $k$-vertex graph profiles
is indeed equal to the number of non-trivial connected graphs with at most $k$ vertices.

Our main result is determining the dimension of the region of $k$-vertex tournament profiles,
i.e., the region of feasible densities of $k$-vertex tournaments in large tournaments.
In the analogy to the graph case,
it is natural to expect that
the dimension of this region
is equal to the number of non-trivial strongly connected tournaments with at most $k$ vertices.
This is indeed a lower bound on the dimension of the region, however, the actual dimension is larger.
Similarly to the case of permutation profiles studied in~\cite{BorP20,GleHKKKL17,GarKMP23},
there is a close connection to Lyndon words, which appear in various contexts in algebra, combinatorics, and computer science.
We say that a tournament $T$ is \emph{Lyndon}
if its strongly connected components listed in the order given by $T$ form a Lyndon word (a formal definition is given and
properties of Lyndon tournaments are discussed in Subsection~\ref{subsec:lyndon});
in particular, every strongly connected tournament is Lyndon.
Let $\TT_k$ be the set of all tournaments with at most $k$ vertices and
let $\TT^L_k$ the set of all non-trivial Lyndon tournaments with at most $k$ vertices,
i.e., Lyndon tournaments with at least two and at most $k$ vertices.
Our main result, which is implied by Theorems~\ref{thm:upth} and~\ref{thm:lowth}, is the following.
\begin{theorem}
\label{thm:tourn}
For every $k\ge 3$,
there exist $x_0\in [0,1]^{\TT^L_k}$ and $\varepsilon>0$ such that
for every $x\in B_{\varepsilon}(x_0)\subseteq [0,1]^{\TT^L_k}$,
there exists a tournamenton $W$ such that \[t(T,W)_{T\in\TT^L_k}=x.\]
Moreover,
there exists a polynomial function $f:[0,1]^{\TT^L_k}\to [0,1]^{\TT_k}$ such that
the following holds for every tournamenton $W$:
\[f(t(T,W)_{T\in\TT^L_k})=t(T,W)_{T\in\TT_k}.\]
\end{theorem}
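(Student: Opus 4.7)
The plan is to split Theorem~\ref{thm:tourn} into the two assertions corresponding to Theorems~\ref{thm:upth} and~\ref{thm:lowth}: the polynomial-expressibility statement (providing $f$, and hence the upper bound on the dimension), and the local-surjectivity statement (providing $x_0$ and $\varepsilon$, and hence the lower bound).

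For the upper bound direction, the starting point is the decomposition of an arbitrary tournament $T$ into its strongly connected components listed in the linear order given by its condensation; this yields a word $w_T = C_1 C_2 \cdots C_m$ over the alphabet $\mathcal{A}$ of strongly connected tournaments. By the Chen--Fox--Lyndon theorem, $w_T$ admits a unique factorization $u_1 \ge u_2 \ge \cdots \ge u_r$ into Lyndon words, each $u_i$ corresponding to a Lyndon tournament $L_i\in\TT^L_k$. The main lemma to prove is that, whenever $T$ is not itself Lyndon (i.e., $r\ge 2$), the density $t(T,W)$ satisfies an identity of the form
\[
t(T,W) \;=\; c_T\cdot\prod_{i=1}^r t(L_i,W) \;+\; \mbox{(lower-order terms)},
\]
where $c_T$ is an explicit combinatorial factor coming from how many ways a sampled vertex set can be partitioned according to the Lyndon factors, and the lower-order terms are densities of tournaments with at most $|T|$ vertices but strictly fewer strongly connected components (or of tournaments with strictly fewer vertices). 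A double induction (primarily on vertex count, secondarily on the component count) then extracts a polynomial expression for $t(T,W)$ purely in densities of Lyndon tournaments on at most $|T|$ vertices, producing the desired $f$.

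For the lower bound direction, the goal is to construct $x_0$ and $\varepsilon>0$ so that every vector in $B_{\varepsilon}(x_0)\subseteq[0,1]^{\TT^L_k}$ is realized as the Lyndon-density profile of some tournamenton. The natural route is via the inverse function theorem: build a parametric family $\{W_\alpha\}$ of tournamentons with $\alpha\in\RR^{\TT^L_k}$, and verify that the Jacobian of the map $\alpha\mapsto(t(L,W_\alpha))_{L\in\TT^L_k}$ has full rank at some $\alpha_0$. A natural starting point is the constant $\tfrac12$-tournamenton, perturbed by signed sums of indicator functions of ordered blocks of $[0,1]$. For each Lyndon tournament $L$, one designs a perturbation $\delta_L$ tailored to $L$ itself, and orders $\TT^L_k$ so that smaller Lyndon tournaments (by vertex count, with ties broken lexicographically on the associated Lyndon words) come first; the aim is to show that the resulting Jacobian matrix has triangular structure with non-zero diagonal entries, so that it is invertible.

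I expect the main obstacle to be the construction of the perturbations $\delta_L$ that decouple different Lyndon tournaments. Because the Lyndon condition is cyclic-order sensitive, a naive perturbation indexed by $L$ will typically affect densities of many other Lyndon tournaments in hard-to-control ways. Exploiting the minimality property of Lyndon words under rotation---each Lyndon word is strictly smaller than all of its non-trivial cyclic rotations---should be the key to building perturbations whose response is concentrated on a single Lyndon tournament, but making this quantitative and triangular across all of $\TT^L_k$ is where the combinatorial heavy lifting will reside.
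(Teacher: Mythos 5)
Your upper-bound half follows essentially the paper's route (Chen--Fox--Lyndon factorization of the component word, flag/product identity, induction), but your key lemma is stated too strongly and the induction you build on it is not well-founded. In the expansion of $\prod_{i}t(L_i,W)$ the constituents in which no strongly connected components merge are exactly the tournaments whose component words are shuffles of $w(L_1),\ldots,w(L_r)$, and these in general have the \emph{same} number of vertices and the \emph{same} number of strongly connected components as $T$ (for instance, for $w(T)=cbb$ the shuffle of $c,b,b$ also produces $bcb$). So the ``lower-order terms'' are not confined to fewer vertices or fewer components, and a double induction on (vertex count, component count) does not terminate. The missing ingredient is the third comparison key used in Lemma~\ref{lm:le2}: among tournaments with the same size and the same number of components, compare the words lexicographically, and invoke Proposition~\ref{prop:le1} (Radford's shuffle-product result) to see that the concatenation $w(L_1)\cdots w(L_r)=w(T)$ is the strictly largest constituent, so all other constituents are strictly smaller in the refined order and the induction of Theorem~\ref{thm:upth} closes.

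The genuine gap is in the lower-bound half, which you only sketch, and whose natural instantiation fails. If you perturb the constant $\tfrac12$ tournamenton, the constraint $W(x,y)+W(y,x)=1$ forces every admissible perturbation $\delta$ to be antisymmetric, and then for every tournament $T$ one has $\frac{\dd}{\dd\varepsilon}\,t\bigl(T,\tfrac12+\varepsilon\delta\bigr)\big|_{\varepsilon=0}=\bigl(\tfrac12\bigr)^{|E(T)|-1}\sum_{\overrightarrow{uv}\in E(T)}\int_{[0,1]^2}\delta(x,y)\,\dd x\,\dd y=0$. Hence the Jacobian of your map $\alpha\mapsto t(L,W_\alpha)_{L\in\TT^L_k}$ vanishes identically at the quasirandom center no matter how the perturbations $\delta_L$ are designed, so no triangular structure can make the Inverse Function Theorem applicable there; and away from that center you offer no construction or rank computation at all. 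The paper instead blows up the direct sum $T_1\oplus\cdots\oplus T_\ell\oplus\bullet$ of \emph{all} non-trivial Lyndon tournaments, with one interval of measure $s_it_{i,j}$ per vertex, differentiates the densities with respect to the scaling variables $s_i$, and proves $\det(\JJ)\not\equiv 0$ by isolating the unique monomial containing every $t$-variable; it is precisely there (Lemmas~\ref{lm:lem7} and~\ref{lm:decomp}) that the Lyndon/shuffle-product uniqueness is used again. Your intuition that rotation-minimality of Lyndon words should decouple the densities points in the right direction, but without a concrete family of tournamentons and a proof that some Jacobian is nonsingular, the local surjectivity statement --- the entire content of Theorem~\ref{thm:lowth} --- remains unproved.
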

\noindent In other words,
the densities of non-trivial Lyndon tournaments are independent, and
the density of any other tournament is a polynomial function of the densities of non-trivial Lyndon tournaments.
In particular, the dimension of the region of feasible densities of tournaments with at most $k$ vertices
is equal to the number of non-trivial Lyndon tournaments with at most $k$ vertices.
Since the densities of tournaments \emph{with exactly $k$ vertices}
determine the densities of tournaments \emph{with at most $k$ vertices},
the dimension of the region of $k$-vertex tournament profiles
is also equal to the number of non-trivial Lyndon tournaments with at most $k$ vertices.

The paper is organized as follows.
We start with reviewing the notation and the concepts used throughout the paper in Section~\ref{sec:prelim}.
In Section~\ref{sec:upper},
we combine flag algebra techniques and properties of Lyndon words to show that the density of any tournament $T$
is a polynomial function of the densities of Lyndon tournaments with the same or smaller number of vertices than that of $T$.
In Section~\ref{sec:lower},
we apply analytic techniques from the theory of combinatorial limits
in conjunction with properties of Lyndon words to show that
the densities of Lyndon tournaments are independent.
This determines the dimension of the region of $k$-vertex tournament profiles for every $k\ge 3$ and
so proves Theorem~\ref{thm:tourn}.

\section{Preliminaries}
\label{sec:prelim}

In this section, we introduce notation used throughout the paper.
We write $[n]$ for the set of the first $n$ positive integers;
we use the shorthand notation $x_{[n]}$ for a sequence $x_1,\ldots,x_n$ (or the vector formed by $x_1,\ldots,x_n$) and
more generally $x_Z$ for a sequence indexed by elements of a set $Z$.
All graphs considered in this paper are simple;
if $G$ is a graph, then $V(G)$ and $E(G)$ are the vertex set and the edge set of $G$, respectively.
The number of vertices of a graph $G$ is denoted by $|G|$.
A \emph{tournament} is an orientation of the complete graph;
we use $V(G)$, $E(G)$ and $|G|$, analogously as in the setting of graphs.
A tournament is \emph{transitive} if it has no directed cycle.
A subtournament of $T$ \emph{induced} by a set $X$ of vertices
is the subtournament formed by the vertices $X$.
The \emph{direct sum} of tournaments $T_1$ and $T_2$, denoted by $T_1\oplus T_2$,
is the tournament obtained from $T_1$ and $T_2$ by adding all edges between $T_1$ and $T_2$ and directing them from $T_1$ and $T_2$.
Observe that a tournament is strongly connected if and only if it is not a direct sum of two tournaments.

\subsection{Combinatorial limits}
\label{subsec:lim}

We now introduce basic concepts from the theory of combinatorial limits,
focusing on graph limits and tournament limits.
We refer to the monograph by Lov\'{a}sz~\cite{Lov12} for a comprehensive introduction to the theory of graph limits.
The \emph{homomorphic density} of a graph $H$ in a graph $G$, denoted by $t(H,G)$,
is the probability that a random function $f: V(H) \rightarrow V(G)$ is a homomorphism,
i.e., a function $f$ such that $f(u)f(v)$ is an edge of $G$ for every $uv\in E(H)$.
We say that a sequence of graphs $(G_n)_{n\in N}$ is \emph{convergent}
if the number of vertices of $G_n$ tends to infinity and
the sequence of homomorphic densities $t(H, G_n)$ converges for every graph $H$.
Convergent sequences of graphs can be represented by an analytic object called a graphon:
a \emph{graphon} is a measurable function $W: [0,1]^2 \rightarrow [0,1]$ that is symmetric,
i.e., $W(x,y) = W(y,x)$ for $(x,y) \in [0,1]^2$.
The definition of a homomorphic density of a graph extends to graphons:
the \emph{homomorphic density} of a graph $H$ in a graphon $W$, denoted by $t(H,W)$, is defined as
\[ t(H,W) = \int_{[0,1]^{V(H)}} \prod_{uv\in E(H)} W(x_u,x_v) \dd x_{V(H)}.\]
We often simply say the \emph{density} of a graph $H$ in a graphon $W$
instead of the homomorphic density $H$ in $W$.
A graphon $W$ is a \emph{limit} of a convergent sequence $(G_n)_{n\in N}$ of graphs
if $t(H, W)$ is the limit of $(t(H,G_n))_{n\in N}$ for every graph $H$.
Lov\'{a}sz and Szegedy in~\cite{LovS06} showed that
every convergent sequence of graphs has a limit graphon and
every graphon is a limit of a convergent sequence of graphs;
also see~\cite{DiaJ08} for a relation to exchangeable arrays.
Conversely, for every graphon $W$,
there is a convergent sequence of graphs such that $W$ is its limit.

We now introduce tournament limits as
studied and applied particularly in the setting of extremal combinatorics in~\cite{ChaGKN19,GrzKLV23,Tho18,ZhaZ20}.
Analogously to the graph case,
the \emph{density} of a tournament $T$ in a tournament $S$, denoted by $t(T,S)$,
is the probability that a random injective function $f:V(T)\rightarrow V(S)$ is a homomorphism, and
a sequence of tournaments $(S_n)_{n\in\NN}$ is \emph{convergent}
if the number of vertices of $S_n$ tends to infinity and
the sequence of densities $t(T,S_n)_{n\in\NN}$ converges for every tournament $T$.
A \emph{tournamenton} is a measurable function $W: [0,1]^2 \rightarrow [0,1]$ satisfying that
$W(x,y) + W(y,x) = 1$ for all $(x,y) \in [0,1]^2$.
The \emph{density} of a tournament $T$ in a tournamenton $W$, denoted by $t(T,W)$, is
\begin{equation}
t(T,W) = \int_{[0,1]^{V(T)}} \prod_{\overrightarrow{uv}\in E(T)} W(x_u,x_v) \dd x_{V(T)};\label{eq:tdef}
\end{equation}
we use $\overrightarrow{uv}$ to emphasize the direction of an edge in a tournament $T$.
A tournamenton is a \emph{limit} of a convergent sequence $(S_n)_{n \in \NN}$ of tournaments
if $t(T, W)$ is the limit of $(t(T,S_n))_{n\in N}$ for every tournament $T$.
As in the case of graphs,
every convergent sequence of tournaments has a limit tournamenton, and
every tournamenton is a limit of a convergent sequence of tournaments.

\subsection{Product of tournaments}
\label{subsec:flag}

The flag algebra method introduced by Razborov in~\cite{Raz07} led to progress on many challenging problems in extremal combinatorics;
the method has been successfully applied particularly in the settings of
graphs~\cite{Raz08,Grz12,HatHKNR12,KraLSWY13,HatHKNR13,PikV13,BabT14,BalHLL14,PikR17,GrzHV19,PikST19},
digraphs~\cite{HlaKN17,CorR17}, hypergraphs~\cite{Raz10,BabT11,GleKV16,BalCL22},
geometric problems~\cite{KraMS12} and permutations~\cite{BalHLPUV15,SliS18,CruDN23}.
In our consideration, the most important is the concept of the product, which we define next.
The same concept also appears in the setting of homomorphisms as discussed in~\cite{Lov12}.

\begin{figure}
\begin{center}
\epsfbox{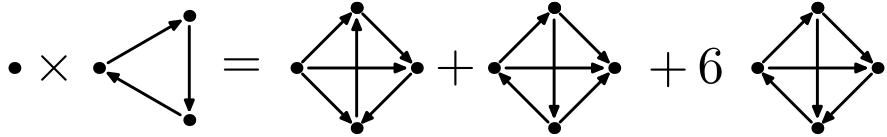}
\end{center}
\caption{An example of the product of two tournaments.}
\label{fig:flag}
\end{figure}

Before defining this notion, we need another notation.
If $\tau$ is a formal linear combinations of tournaments, say $\tau=\alpha_1 T_1+\cdots \alpha_k T_k$,
we write $t(\tau,W)$ for the corresponding linear combination $\alpha_1 t(T_1,W)+\cdots+\alpha_k t(T_k,W)$ of densities in a tournamenton.
If $T_1$ and $T_2$ are two tournaments,
then the (flag) \emph{product} of $T_1$ and $T_2$ denoted by $T_1\times T_2$,
is a formal linear combination of all tournaments $T$ with $|T_1|+|T_2|$ vertices that
can be obtained from $T_1$ and $T_2$ by adding edges between $T_1$ and $T_2$ arbitrarily, and
the coefficient at $T$ in the linear combination
is equal to the number of ways that the edges can be added between $T_1$ and $T_2$ yielding $T$.
See Figure~\ref{fig:flag} for an example.
The definition of the product can be linearly extended to formal linear combinations of tournaments,
e.g., $T_1\times (T_2+2\,T_3)=T_1\times T_2+2\,T_1\times T_3$, and
the product (of formal linear combinations of tournaments) is associative,
i.e., $(T_1\times T_2)\times T_3=T_1 \times (T_2\times T_3)$.

The key property of the product is the next identity,
which holds for any two tournaments $T_1$ and $T_2$ and any tournamenton $W$ and
which follows from~\cite{Raz07}:
\[t(T_1\times T_2,W)=t(T_1,W) \cdot t(T_2,W).\]

\subsection{Lyndon words and Lyndon tournaments}
\label{subsec:lyndon}

Lyndon words, introduced by \v{S}ir\v{s}ov~\cite{Shi53} and Lyndon~\cite{Lyn54} in the 1950s,
have many applications in algebra, combinatorics and computer science.
Let $\Sigma$ be a linearly ordered alphabet, and
let the associated lexicographic order on words over $\Sigma$ be denoted by $\preceq$.
For example, if $\Sigma=\{a,b\}$ with the usual order on letters,
then the words of length two are ordered as $a\preceq aa\preceq ab\preceq b\preceq ba\preceq bb$.
A word $s_1 \cdots s_n$ over the alphabet $\Sigma$ is \emph{Lyndon}
if no proper suffix of the word $s_1 \cdots s_n$ is smaller (in the lexicographic order) than the word $s_1 \cdots s_n$ itself. 
For example,
the words $ab$ and $aab$ are Lyndon but the word $abaabb$ is not
because the word $aabb$ is smaller than the word $abaabb$ itself.

We next survey several well-known properties of Lyndon words.
We start with the following property given by the Chen-Fox-Lyndon Theorem~\cite{CheFL58}.

\begin{proposition}
\label{prop1}
Every word over a linearly ordered alphabet can be uniquely expressed as
a concatenation of lexicographically non-increasing Lyndon words.
\end{proposition}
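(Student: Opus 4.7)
My plan combines one auxiliary fact about concatenations of Lyndon words with a greedy merging argument for existence and an intrinsic characterization of the last factor for uniqueness. The auxiliary fact I would establish first is that if $u$ and $v$ are Lyndon with $u \prec v$, then $uv$ is Lyndon. I would check this by classifying the proper non-empty suffixes of $uv$ into three families---the word $v$ itself, proper non-empty suffixes of $v$, and words of the form $u'v$ where $u'$ is a proper non-empty suffix of $u$---and verifying that each is strictly larger than $uv$ using the Lyndon properties of $u$ and $v$ together with $u \prec v$.

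For existence, start from the decomposition of the given word $w$ into its single letters, each trivially Lyndon. While two consecutive factors $\ell_i, \ell_{i+1}$ satisfy $\ell_i \prec \ell_{i+1}$, merge them into the single factor $\ell_i \ell_{i+1}$, which is Lyndon by the auxiliary fact. Each merge strictly decreases the number of factors, so the process terminates; at termination no adjacent pair is strictly increasing, so the resulting decomposition is non-increasing Lyndon.

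For uniqueness, I would show that the last factor $\ell_k$ of any non-increasing Lyndon decomposition $w=\ell_1\cdots\ell_k$ is the lexicographically smallest non-empty suffix of $w$, a quantity determined by $w$ alone. Given an arbitrary non-empty suffix $s \neq \ell_k$ of $w$, either $s$ is a proper suffix of $\ell_k$, in which case the Lyndon property of $\ell_k$ forces $s \succ \ell_k$, or $s = s'\ell_{j+1}\cdots\ell_k$ with $j<k$ and $s'$ a non-empty suffix of $\ell_j$. In the latter case a short case analysis on whether $s' = \ell_j$ or $s'$ is a proper suffix of $\ell_j$, combined with the Lyndon property of $\ell_j$ and the non-increasing inequality $\ell_j \succeq \ell_k$, yields $s \succ \ell_k$. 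Uniqueness of the full decomposition then follows by induction on $|w|$ applied after deleting $\ell_k$.

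The main delicacy is the bookkeeping in the uniqueness step, where one must propagate the chain of inequalities from $s'$ and $\ell_j$ to a genuine strict comparison between the full suffix $s$ and $\ell_k$; once the case split (letter mismatch versus proper prefix) is laid out carefully, no further ideas are required.
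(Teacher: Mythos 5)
Your proposal is a correct proof, but note that the paper does not prove Proposition~\ref{prop1} at all: it is the Chen--Fox--Lyndon theorem, quoted from the cited reference \cite{CheFL58}, so there is no in-paper argument to compare against. Judged on its own, your plan is the standard and complete route: the auxiliary fact ($u,v$ Lyndon with $u\prec v$ implies $uv$ Lyndon) is true and your suffix classification proves it; greedy merging from the single-letter factorization gives existence; and uniqueness follows because the last factor of any non-increasing Lyndon factorization is the lexicographically least non-empty suffix of $w$, an intrinsic quantity, after which induction on length (deleting $\ell_k$) finishes the argument. Two small points of care in the uniqueness step, which you already flag as bookkeeping: when $s=s'\ell_{j+1}\cdots\ell_k$ with $s'$ a \emph{proper} suffix of $\ell_j$, the inequality $s'\succ\ell_j$ must be witnessed by a letter mismatch (it is, since $s'$ is strictly shorter than $\ell_j$ and so cannot be a prefix of it), which is what lets you conclude $s\succ\ell_j\succeq\ell_k$ after concatenation; and when $s'=\ell_j$ with $\ell_j=\ell_k$ you need the convention, used in the paper's definition of $\preceq$, that a proper prefix precedes its extensions, so that $s=\ell_k\ell_{j+1}\cdots\ell_k\succ\ell_k$ strictly. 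With those details written out, the proof is complete and matches the classical argument.
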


\noindent For example, the word $ababaab$ is the concatenation of Lyndon words $ab$, $ab$ and $aab$.

A \emph{subword} of a word $s_1 \ldots s_n$ is any word $s_{i_1} \ldots s_{i_m}$ with $1 \leq i_1 < \cdots < i_m \leq n$.
The \emph{shuffle product} of words $s_1 \ldots s_n$ and $t_1 \ldots t_m$,
which is denoted by $s_1 \ldots s_n \otimes_S t_1 \ldots t_m$,
is the formal sum of all $\binom{n+m}{n}$ (not necessarily distinct) words of length $n+m$ that
contain the words $s_1 \ldots s_n$ and $t_1 \ldots t_m$ as subwords formed by disjoint sets of letters.
For example, $ab \otimes_S ac = 2aabc+2aacb+abac+acab$.
The following lemma can be found in~\cite{Rad79}.
 
\begin{proposition}
\label{prop:le1}
Let $w_1, \dots, w_n$ be Lyndon words such that $w_1 \succeq \cdots \succeq w_n$.
The lexicographically largest constituent in the shuffle product $w_1 \otimes_S \cdots \otimes_S w_n$
is the term $w_1\dots w_n$ formed by the concatenation of the words $w_1,\ldots,w_n$ (in this order), and
if the words $w_1,\dots,w_n$ are distinct, then the coefficient of this term is equal to one.
\end{proposition}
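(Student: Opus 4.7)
The plan is to prove the proposition by induction on $n$, with $n=1$ immediate. For the inductive step, I would first establish a general greedy lemma for shuffle products: for arbitrary words $r_1, \ldots, r_n$ (not necessarily Lyndon), the lexicographically largest constituent of $r_1 \otimes_S \cdots \otimes_S r_n$ is obtained by iteratively drawing a single letter from the stream whose remaining word is currently lex-largest among all nonempty streams. The proof is an exchange argument: if a shuffle $u$ at some step draws from a stream whose remaining word is not lex-largest, a swap with a later greedy draw yields a lex-larger shuffle, contradicting $u$'s maximality.

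Applied to the Lyndon words $w_1 \succeq \cdots \succeq w_n$ of the proposition: initially the remaining words of the streams are exactly $w_1, \ldots, w_n$, and $w_1$ is lex-largest. After drawing the first letter $(w_1)_1$, the remaining word of stream~$1$ becomes the proper suffix of $w_1$ obtained by deleting its first letter; since $w_1$ is Lyndon, every proper suffix of $w_1$ is strictly greater than $w_1$ itself, hence strictly greater than every $w_j$ with $j\ge 2$. So the greedy procedure continues to draw from stream~$1$, exhausting $w_1$ in order; applying the same argument to $w_2, \ldots, w_n$ yields the concatenation $w_1 w_2 \cdots w_n$ as the lex-maximum constituent.

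For the coefficient-one statement when the $w_i$ are pairwise distinct, I would invoke the uniqueness of the Lyndon factorization (Proposition~\ref{prop1}): the concatenation $w_1\cdots w_n$ has the unique non-increasing Lyndon factorization $(w_1,\ldots,w_n)$. I plan to show that any shuffle-labeling of the string $w_1\cdots w_n$ realizing it as a shuffle of $w_1,\ldots,w_n$ must be the \emph{identity} labeling (with the first $|w_1|$ positions labeled $1$, the next $|w_2|$ labeled $2$, and so on), by considering the earliest position where an alternative labeling would deviate from the identity and deriving a contradiction via the Lyndon properties of the relevant $w_i$ together with the uniqueness of the Lyndon factorization of suitable prefixes. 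The main obstacle is the tie-breaking case in the exchange argument for the greedy lemma, when two streams temporarily have equal lex-largest remaining words; while this subtlety does not actually arise in our Lyndon application, thanks to the strict inequality between a proper suffix of $w_1$ and any $w_j$, handling it rigorously in the general lemma requires some care in specifying which of the tied streams is selected and checking that the exchange remains valid.
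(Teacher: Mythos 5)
The paper itself gives no proof of Proposition~\ref{prop:le1}; it is quoted from Radford~\cite{Rad79}, so there is no in-paper argument to match, and a self-contained proof is welcome. Your treatment of the first assertion is essentially the standard route and is sound in outline: the ``largest merge'' greedy lemma (always draw from the stream whose remaining suffix is lexicographically largest) is true, and combined with the fact that every proper suffix of a Lyndon word is strictly larger than the word itself it forces any greedy run to exhaust $w_1$ in order, then $w_2$, and so on, so the largest constituent is the concatenation $w_1\cdots w_n$. Two routine points need tightening: a single exchange need not produce a \emph{strictly} larger shuffle (it can produce an equal one), so the argument should be run on a maximal shuffle chosen to agree with the greedy run for as long as possible; and ties do occur in the first assertion when some $w_i$ coincide (only $\succeq$ is assumed), though they are harmless since the emitted letters do not depend on the tie-breaking.

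The genuine gap is the coefficient-one claim, where you only announce a plan. What has to be shown is that the unique partition of the positions of $w_1\cdots w_n$ into subsequences spelling $w_1,\ldots,w_n$ is the consecutive-block one, and neither tool you invoke delivers this: Proposition~\ref{prop1} concerns factorizations into \emph{consecutive} factors, not scattered subsequences, and maximality/greedy reasoning with strict inequalities between streams does not by itself force uniqueness of the realization. For instance, $aba\otimes_S ab$ has lexicographically largest constituent $ababa$ with coefficient $2$, even though $aba\succ ab$ strictly and the words are distinct; one of the two realizations draws its first letter from the strictly smaller stream, which is precisely the situation your ``earliest deviation'' argument must exclude, so the Lyndon hypothesis (note $aba$ is not Lyndon) must enter in an essential, quantitative way. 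Concretely, your reduction leads to the following statement that still needs proof: if $s$ is a nonempty suffix of $w_b$ and the word $s\,w_{b+1}\cdots w_n$ is realized as a shuffle of $s,w_{b+1},\ldots,w_n$, then its first letter must belong to the $s$-stream. This is the substantive half of Radford's lemma and requires its own induction (on total length, using $s\succ w_j$ for $j>b$ and the Lyndon property of the $w_j$); as written, your proposal does not supply it.
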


In this paper,
the alphabet $\Sigma^T$ is the set of strongly connected tournaments.
As the linear order on $\Sigma^T$, we fix any order such that
strongly connected tournaments with smaller number of vertices precede those with the larger number of vertices, and
strongly connected tournaments with the same number of vertices are ordered arbitrarily.
Hence, the smallest element of $\Sigma^T$ is the single-vertex tournament,
the second smallest is the cyclically oriented triangle, and
the third smallest the unique four-vertex tournament containing a Hamilton cycle.
Note that every tournament $T$ can be uniquely expressed as the direct sum of strongly connected tournaments, and
let $w(T)$ be the word formed by the strongly connected tournaments whose direct sum equals $T$ (the strongly connected tournaments
appear in $w(T)$ in the same order as in the direct sum yielding $T$).
A tournament $T$ is \emph{Lyndon}
if the word $w(T)$ is Lyndon (with respect to the order on $\Sigma^T$).
In particular, all strongly connected tournaments are Lyndon.
Lyndon tournaments with at most five vertices are given in Figure~\ref{fig:Lyndon}.
Recall that
$\TT_k$ is the set of all tournaments with at most $k$ vertices, and
$\TT^L_k$ is the set of all non-trivial Lyndon tournaments with at most $k$ vertices,
i.e., Lyndon tournaments with at least two and at most $k$ vertices.

\begin{figure}
\begin{center}
\epsfbox{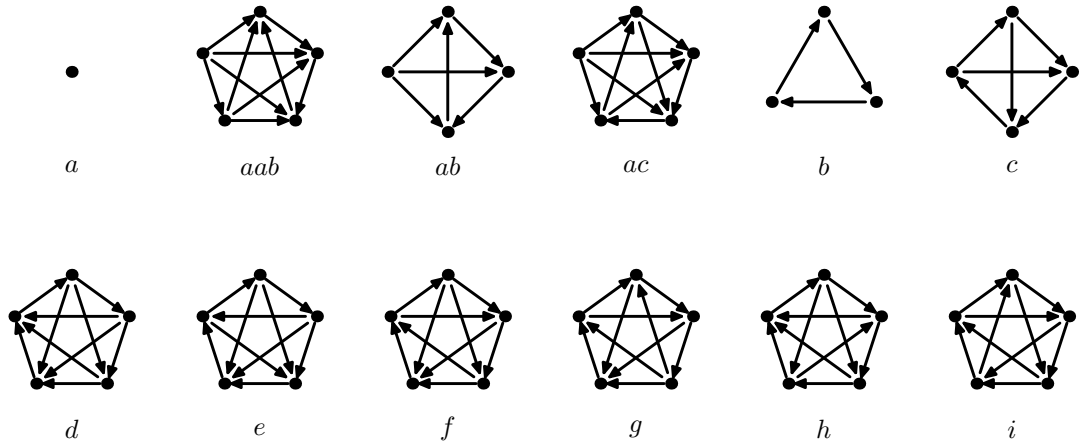}
\end{center}
\caption{Lyndon tournaments with at most five vertices listed in the lexicographic order of the words associated with them.
         The letters $a,\ldots,i$ are used to denote the nine tournaments of $\Sigma^T$ that
	 are the smallest in the chosen linear order
	 when a particular order on the strongly connected $5$-vertex tournaments is fixed.}
\label{fig:Lyndon}
\end{figure}

\section{Upper bound on the dimension}
\label{sec:upper}

In this section,
we show that
the density of every $k$-vertex tournament can be expressed as
a polynomial in the densities of non-trivial Lyndon tournaments with at most $k$ vertices.
This is formally stated in the next theorem.
We slightly abuse the notation when $k=1$ or $k=2$:
the set $\TT^L_k$ is empty in such case and the theorem simply asserts that $p_T$ is constant.

\begin{theorem}
\label{thm:upth}
Let $T$ be a tournament with $k$ vertices.
There exists a polynomial $p_T$ in $|\TT^L_k|$ variables indexed by tournaments $S\in\TT^L_k$ such that
\[ t(T,W)= p_T\left(t(S,W)_{S \in \TT^L_k}\right) \]
for every tournamenton $W$.
\end{theorem}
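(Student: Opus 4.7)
The plan is to proceed by strong induction on $k$-vertex tournaments in the order $\prec$, where $T'\prec T$ iff $T'$ has fewer strongly connected components than $T$, or both tournaments have the same number of strongly connected components and $w(T')$ is lexicographically strictly smaller than $w(T)$. For a Lyndon tournament $T$ we have $T\in\TT^L_k$ and may simply take $p_T=x_T$; this is the base case. The inductive step concerns non-Lyndon $T$.

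Let $T$ be non-Lyndon with $m\ge 2$ strongly connected components. Apply the Chen--Fox--Lyndon theorem (Proposition~\ref{prop1}) to $w(T)$ to obtain a factorization $w(T)=w_1w_2\cdots w_n$ with $w_1\succeq w_2\succeq\cdots\succeq w_n$ Lyndon and $n\ge 2$. Let $L_i$ be the Lyndon tournament with $w(L_i)=w_i$; since $n\ge 2$, each $L_i$ has fewer than $k$ vertices and therefore lies in $\TT^L_k$, and $T=L_1\oplus L_2\oplus\cdots\oplus L_n$. I plan to analyze the flag product $L_1\times L_2\times\cdots\times L_n$, which is a formal linear combination of $k$-vertex tournaments whose $t$-value in any tournamenton $W$ equals $\prod_i t(L_i,W)$.

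The central combinatorial claim is that
\[
L_1\times L_2\times\cdots\times L_n \;=\; c\,T \;+\; \sum_{T'\prec T} c_{T'}\,T'
\]
for some positive integer $c$ and nonnegative integers $c_{T'}$. The argument has two ingredients. First, in any tournament $T'$ arising in the flag product, the strongly connected components of the $L_i$'s remain strongly connected in $T'$ (their internal edges are unchanged), so every strongly connected component of $T'$ is a union of strongly connected components of the $L_i$'s, and $T'$ has at most $m$ strongly connected components. Equality holds exactly when no merging occurs, in which case the strongly connected components of $T'$ coincide with those of the $L_i$'s, and the edges between distinct $L_i$'s are forced by the linear order on these components; the word $w(T')$ is then a shuffle of $w_1,\ldots,w_n$, and the coefficient of $T'$ in the flag product equals the coefficient of $w(T')$ in the shuffle product $w_1\otimes_S\cdots\otimes_S w_n$. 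Second, Proposition~\ref{prop:le1} identifies $w(T)=w_1w_2\cdots w_n$ as the lex-largest shuffle, with a positive coefficient (equal to $1$ if the $w_i$ are all distinct). Combining the two yields $c>0$ and $w(T')\prec w(T)$ for every other $T'$ in the product with $m$ strongly connected components; any $T'$ with fewer than $m$ strongly connected components satisfies $T'\prec T$ by definition of $\prec$.

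Given the claim, rearranging yields
\[
t(T,W) \;=\; \frac{1}{c}\left(\prod_{i=1}^n t(L_i,W) \;-\; \sum_{T'\prec T} c_{T'}\,t(T',W)\right),
\]
and the inductive hypothesis applied to each $T'\prec T$, together with the fact that $t(L_i,W)=x_{L_i}$ for $L_i\in\TT^L_k$, expresses $t(T,W)$ as a polynomial in $\{t(S,W):S\in\TT^L_k\}$. I expect the main obstacle to be the strongly connected component analysis: matching flag product coefficients with shuffle product coefficients, so that Proposition~\ref{prop:le1} can be invoked to pinpoint $T$ as the lex-maximum term of maximum component count in the product. Once this bridge is established, the rest of the argument is a straightforward induction.
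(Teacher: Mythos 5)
Your proposal is correct and follows essentially the same route as the paper: Lyndon-factorize $w(T)$ via Proposition~\ref{prop1}, take the flag product of the corresponding Lyndon tournaments, split the resulting terms according to whether strongly connected components merge, invoke Proposition~\ref{prop:le1} to see that every term other than $T$ itself precedes $T$ in the (component count, lexicographic) order, and induct. One small correction: when $T$ has a single-vertex strongly connected component, some factor $L_i$ in the Lyndon factorization may be the one-vertex tournament, which is Lyndon but \emph{not} in $\TT^L_k$; since its density is identically $1$ in every tournamenton, you simply omit such factors from the product, exactly as the paper does with the condition $T_i\neq\bullet$.
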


To prove Theorem~\ref{thm:upth},
we introduce a linear order $<$ on all tournaments using the linearly ordered alphabet $\Sigma^T$.
A tournament $S$ is smaller than a tournament $T$, i.e., $S<T$, if
\begin{itemize}
\item the tournament $S$ has fewer vertices than $T$, or
\item the tournaments $S$ and $T$ have the same number of vertices
      but the number of strongly connected components of $S$ is smaller than that of $T$, or
\item the tournaments $S$ and $T$ have the same number of vertices and
      the same number of strongly connected components
      but the word $w(S)$ is lexicographically smaller than $w(T)$ (with respect to the order on $\Sigma^T$).
\end{itemize}
The eight smallest tournaments in the order $<$ can be found in Figure~\ref{fig:ordT}.

\begin{figure}
\begin{center}
\epsfbox{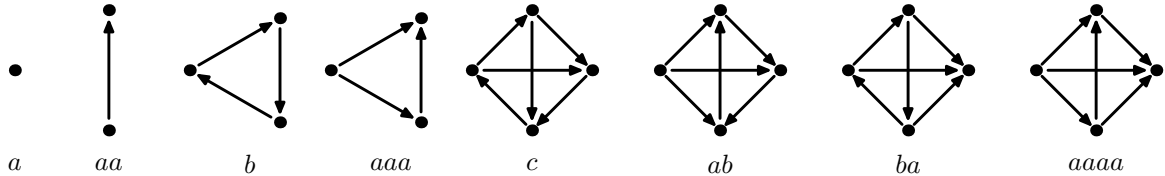}
\end{center}
\caption{The eight smallest tournaments in the order $<_T$.
         The letters describing the strongly connected tournaments
	 are the same as in Figure~\ref{fig:Lyndon}.}
\label{fig:ordT}
\end{figure}

We next state the key lemma to prove Theorem~\ref{thm:upth}.
Recall that the word $w(T)$ associated with a tournament $T$
has a unique decomposition into non-increasing Lyndon words by Proposition~\ref{prop1}.
Hence,
for every tournament $T$,
there exist unique Lyndon tournaments $T_1,\ldots,T_{\ell}$ such that
$T=T_1\oplus\cdots\oplus T_{\ell}$ and $w(T_1)\succeq\cdots\succeq w(T_{\ell})$.
In particular, if $T$ is Lyndon, then $\ell=1$ and $T_1=T$.

\begin{lemma}
\label{lm:le2}
Let $T$ be a tournament with $k$ vertices that is not Lyndon, and
let $T_1,\ldots,T_{\ell}$ be the unique Lyndon tournaments such that
$T=T_1\oplus\cdots\oplus T_{\ell}$ and $w(T_1)\succeq\cdots\succeq w(T_{\ell})$.
There exist a real $\gamma$ and
reals $\alpha_S$ indexed by $k$-vertex tournaments $S$ smaller than $T$ such that
\[t(T,W)=\gamma\prod_{i\in [\ell]}t(T_i,W)+\sum_{|S|=k,\,S<T}\alpha_S\,t(S,W)\]
for every tournamenton $W$.
\end{lemma}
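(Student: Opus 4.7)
The whole argument is powered by the product identity $t(T_1\times\cdots\times T_\ell,W)=\prod_{i=1}^{\ell}t(T_i,W)$. My plan is to show that in the formal linear combination $T_1\times\cdots\times T_\ell$ the tournament $T=T_1\oplus\cdots\oplus T_\ell$ appears with a strictly positive coefficient $\gamma'$, while every other tournament in its support is strictly smaller than $T$ in the order $<$. Once this is established, rearranging the identity and dividing by $\gamma'$ yields the formula in the statement, with $\gamma=1/\gamma'$ and $\alpha_S$ equal to $-1/\gamma'$ times the coefficient of $S$ in the product.

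Consider any tournament $R$ appearing with nonzero coefficient in $T_1\times\cdots\times T_\ell$. By the definition of the product, $V(R)$ admits an ordered partition $V_1\sqcup\cdots\sqcup V_\ell$ such that $R[V_i]\cong T_i$ for each $i$. Write $T_i=C_{i,1}\oplus\cdots\oplus C_{i,k_i}$ for its decomposition into strongly connected components. Each $C_{i,r}$ is strongly connected in the induced subtournament $R[V_i]$, and hence in $R$ itself, so it lies inside a single strong component of $R$. Consequently, if $R=D_1\oplus\cdots\oplus D_m$ is the strong component decomposition of $R$, then $\{D_1,\ldots,D_m\}$ is a coarsening of $\{C_{i,r}\}_{i,r}$, so $m\le\sum_i k_i$. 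If the inequality is strict, then $R$ has fewer strongly connected components than $T$ (which has exactly $\sum_i k_i$ of them), and $R<T$ by the second criterion in the definition of $<$.

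In the remaining case $m=\sum_i k_i$, every $D_j$ coincides with some $C_{i,r}$, and within each fixed $i$ the relative order of the $C_{i,r}$'s in the sequence $D_1,\ldots,D_m$ matches their order in $T_i$, because the edges of $R[V_i]=T_i$ between $C_{i,r}$ and $C_{i,r'}$ go from the former to the latter whenever $r<r'$. Hence $w(R)$ arises as a constituent of the shuffle product $w(T_1)\otimes_S\cdots\otimes_S w(T_\ell)$. Since $w(T_1)\succeq\cdots\succeq w(T_\ell)$ are Lyndon, Proposition~\ref{prop:le1} gives $w(R)\preceq w(T_1)\cdots w(T_\ell)=w(T)$: a strict inequality yields $R<T$ by the third criterion of $<$, while equality $w(R)=w(T)$ forces $R=T$ because a tournament is uniquely determined by the ordered sequence of its strongly connected components. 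Finally, the natural partition $V_i:=V(T_i)$ inside $T$ realises $T$ itself in $T_1\times\cdots\times T_\ell$, so $\gamma'\ge 1$ and the division goes through.

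The most delicate step is the no-merging case: one must check that within each $V_i$ the order of the components $C_{i,r}$ is preserved among the $D_j$'s, which is what identifies $w(R)$ as a genuine shuffle of the words $w(T_i)$ rather than an arbitrary interleaving. Once this is in place, Proposition~\ref{prop:le1} together with the three-level definition of $<$ does all the remaining work.
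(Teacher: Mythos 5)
Your proof is correct and follows essentially the same route as the paper: expand the flag product $T_1\times\cdots\times T_\ell$, note that merging strong components makes the resulting tournament smaller under the second criterion of $<$, and otherwise identify $w(S)$ as a constituent of the shuffle product $w(T_1)\otimes_S\cdots\otimes_S w(T_\ell)$ so that Proposition~\ref{prop:le1} forces $S<T$ or $S=T$, then divide by the (non-zero) coefficient of $T$. Your extra remark about why the component order within each $V_i$ is preserved is just a more explicit version of the paper's corresponding step.
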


\begin{proof}
For each $k$-vertex tournament $S$,
let $\beta_S$ be the coefficient of $S$ in the flag product of the tournaments $T_1,\ldots,T_\ell$, i.e.,
\[T_1\times\cdots\times T_\ell=\sum_{|S|=k}\beta_S S.\]
Recall that the properties of the product yield that
\[\prod_{i\in [\ell]}t(T_i,W)=\sum_{|S|=k}\beta_S t(S,W)\]
for every tournamenton $W$.
We next analyze which of the coefficients $\beta_S$ are non-zero.
Note that the definition of the product $T_1\times\cdots\times T_\ell$ implies that
$\beta_S$ is non-zero for a $k$-vertex tournament $S$ if and only if
the tournament $S$ can be obtained by taking the tournaments $T_1,\ldots,T_{\ell}$ and adding edges between them.
In particular, $\beta_T\not=0$.

Consider a $k$-vertex tournament $S$ with $\beta_S\not=0$,
i.e., $S$ can be obtained by adding edges between the tournaments $T_1,\ldots,T_{\ell}$.
If adding the new edges merges some strongly connected components of the tournaments $T_1,\ldots,T_{\ell}$,
then the tournament $S$ has fewer strongly connected components than the tournament $T=T_1\oplus\cdots\oplus T_{\ell}$.
Hence, the tournament $S$ is smaller than $T$ in the order $<$.
Assume that adding the new edges does not merge any strongly connected components, i.e.,
the strongly connected components of $S$ correspond one-to-one to the strongly connected components of $T$.
In particular, the word $w(S)$ is a permutation of the letters of the word $w(T)$.
Moreover, for every $T_i$, $i\in [\ell]$,
the strongly connected components of $T_i$ must follow the same order in $S$ as in $T_i$.
It follows that the word $w(S)$ is a constituent in the shuffle product $w(T_1)\otimes_S\cdots\otimes_S w(T_\ell)$.
Since $w(T)$ is the largest constituent in this shuffle product by Proposition~\ref{prop:le1},
we obtain that the tournament $S$ is smaller than $T$ in the order $<$ unless $S=T$
also when adding the new edges does not merge any strongly connected components.

We have shown that $\beta_S$ is non-zero for a $k$-vertex tournament $S\not=T$
only if $S$ is smaller than $T$ in the order $<$.
The statement now follows by setting $\gamma=1/\beta_T$ and $\alpha_S=-\beta_S/\beta_T$.
\end{proof}

We are now ready to prove the main theorem of this section.

\begin{proof}[Proof of Theorem~\ref{thm:upth}]
We proceed by induction using the order $<$ on the tournaments.
The base case is when $T$ is the $1$-vertex tournament and
we set $p_T$ to be the constant $1$.

We now present the induction step.
Let $T$ be a tournament with $k\ge 2$ vertices.
If the tournament $T$ is Lyndon,
we simply set the polynomial $p_T$ as
\[p_T\left(x_{\TT^L_k}\right):=x_T.\]
Otherwise, Lemma~\ref{lm:le2} yields that
there exist a real $\gamma$ and
reals $\alpha_S$ indexed by $k$-vertex tournaments $S$ smaller than $T$ such that
\[t(T,W)=\gamma\prod_{i\in [\ell]}t(T_i,W)+\sum_{|S|=k,\,S<T}\alpha_S\,t(S,W)\]
for every tournamenton $W$,
where $T_1,\ldots,T_{\ell}$ are the unique Lyndon tournaments such that
$T=T_1\oplus\cdots\oplus T_{\ell}$ and $w(T_1)\succeq\cdots\succeq w(T_{\ell})$.
Hence, we can set the polynomial $p_T$ as
\[p_T\left(x_{\TT^L_k}\right):=\gamma\prod_{i\in [\ell], T_i\not=\bullet}x_{T_i}+\sum_{|S|=k,\,S<T}\alpha_S\,p_S\left(x_{\TT^L_k}\right)\]
where the polynomials $p_S$ for $k$-vertex tournaments $S$ smaller than $T$ exist by induction.
We remark that
if $T$ is the transitive tournament on $k$ vertices,
then the product in the definition of the polynomial $p_T$ is empty and so interpreted as equal to one,
i.e.,
\[p_T\left(x_{\TT^L_k}\right):=\gamma+\sum_{|S|=k,\,S<T}\alpha_S\,p_S\left(x_{\TT^L_k}\right)\]
in this case.
In particular, if $T$ is the unique $2$-vertex tournament,
then $p_T$ is the constant equal to $1/2$ (indeed, $\gamma=1/2$ in this case).
\end{proof}

\section{Lower bound on the dimension}
\label{sec:lower}

In this section,
we complete the proof of Theorem~\ref{thm:tourn}
by constructing a family of tournamentons
where the densities of non-trivial Lyndon tournaments with at most $k$ vertices are independent.
Fix $k\ge 3$ for the rest of this section.
Let $\ell=|\TT^L_k|$ and
let $T_1,\ldots,T_{\ell}$ be the tournaments from $\TT^L_k$
listed in the decreasing lexicographic order (with respect to the order on $\Sigma^T$).
For example, when $k=4$, we have $\ell=3$ and
the tournaments $T_1$, $T_2$ and $T_3$ 
are the tournaments associated with the words $c$, $b$ and $ab$ in Figure~\ref{fig:Lyndon}.
Let $n_i$ be the number of vertices of $T_i$, $i\in [\ell]$,
let $v_{i,1},\ldots,v_{i,n_i}$ be the vertices of $T_i$, and
let $N=n_1+\cdots+n_{\ell}$.

\begin{figure}
\begin{center}
\epsfbox{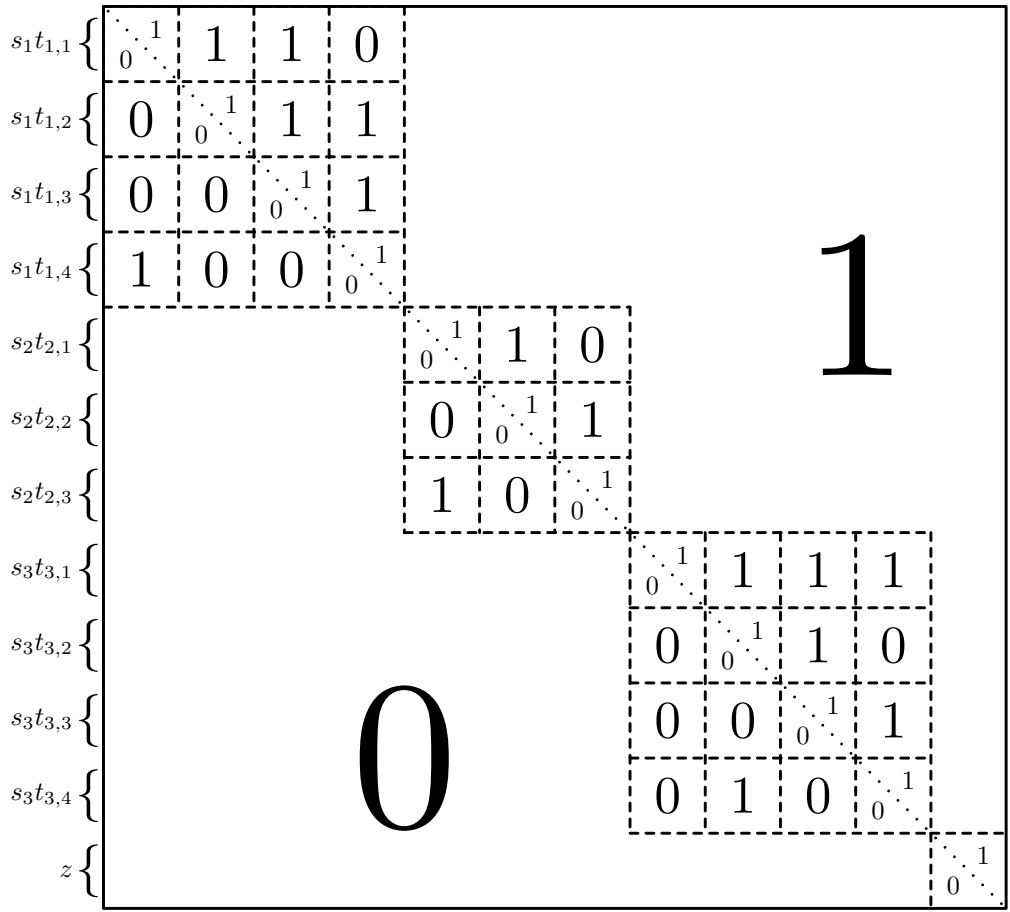}
\end{center}
\caption{The tournamenton $W_4$.
         Note that $\ell=3$ and
	 the tournaments $T_1$, $T_2$ and $T_3$
	 are the tournaments associated with the words $c$, $b$ and $ab$ in Figure~\ref{fig:Lyndon}.
	 The origin of the coordinate system is in the top left corner (in the analogy to adjacency matrices).}
\label{fig:W4}
\end{figure}

We define a family of tournamentons $W_k$ parameterized by $\ell$ variables $s_i$, $i\in [\ell]$, and
$N$ variables $t_{i,j}$, $i\in [\ell]$ and $j\in [n_i]$.
For brevity,
we will refer to the variables $s_i$ as the \emph{$s$-variables} and
to $t_{i,j}$ as the the \emph{$t$-variables}.
Note that the $t$-variables one-to-one correspond to the vertices of the tournaments $T_1,\ldots,T_{\ell}$.
We next define a tournamenton $W_k(s_{[\ell]},t_{1,[n_1]},\ldots,t_{\ell,[n_\ell]})$
for every choice of positive reals $s_i$, $i\in [\ell]$, and
$t_{i,j}$, $i\in [\ell]$ and $j\in [n_i]$, such that
\[\sum_{i\in [\ell]}s_i\sum_{j\in [n_i]}t_{i,j}<1.\]
Informally speaking,
the tournamenton $W_k$ is the ``blow up'' of the tournament $T_1\oplus\cdots\oplus T_{\ell}\oplus\bullet$
with vertices of a tournament $T_i$, $i\in [\ell]$, blown up to an interval of measure $s_it_{i,j}$, $j\in [n_i]$.
To rigorously construct the tournamenton $W_k$,
we first split the interval $[0,1]$ into $N+1$ disjoint intervals:
an interval $I_{i,j}$, $i\in [\ell]$ and $j\in [n_i]$, has measure $s_it_{i,j}$, and
the remaining interval $I_0$ has measure
\[1-\sum_{i\in [\ell]}s_i\sum_{j\in [n_i]}t_{i,j}.\]
If the tournament $T_i$ contains an arc from the vertex $v_{i,j}$ to the vertex $v_{i,j'}$ in $T_i$,
then $W_k$ is equal to one on $I_{i,j}\times I_{i,j'}$ and to zero on $I_{i,j'}\times I_{i,j}$.
For all $1\le i<i'\le\ell$, $j\in [n_i]$ and $j'\in [n_{i'}]$,
the tournamenton $W_k$ is equal to one on $I_{i,j}\times I_{i',j'}$ and to zero on $I_{i',j'}\times I_{i,j}$, and
for all $i\in [\ell]$ and $j\in [n_i]$,
the tournamenton $W_k$ is equal to one on $I_{i,j}\times I_0$ and to zero on $I_0\times I_{i,j}$.
Finally,
if either $x,y\in I_{i,j}$ for some $i\in [\ell]$ and $j\in [n_i]$ or $x,y\in I_0$,
then $W_k(x,y)=1$ if $x<y$, $W_k(x,y)=0$ if $x>y$, and $W_k(x,y)=1/2$ if $x=y$.
The construction of the tournamenton $W_4$ is illustrated in Figure~\ref{fig:W4}.
We remark that
the main reason for including the part corresponding to the interval $I_0$ is that
the expressions for densities of the tournaments $T_1,\ldots,T_{\ell}$ in $W_k$
are homogeneous polynomials in the $s$-variables and the $t$-variables, and
so the analysis of the densities of the tournaments $T_1,\ldots,T_{\ell}$ becomes significantly simpler.

We next consider the function $F:[0,1]^{\ell+N}\to [0,1]^{\ell}$
defined as 
\[F(s_{[\ell]},t_{1,[n_1]},\ldots,t_{\ell,[n_\ell]})_i=t(T_i,W_k(s_{[\ell]},t_{1,[n_1]},\ldots,t_{\ell,[n_\ell]}))\]
for $i\in [\ell]$.
Recall that $s_{[\ell]}$ stands for $s_1,\ldots,s_{\ell}$; $t_{i,[n_i]}$ is used analogously.
We fix $i\in [\ell]$ and inspect the formula \eqref{eq:tdef} that defines $t(T_i,W_k)$.
Observe that if $x\in [0,1]^{V(T_i)}$ and $x_v\in I_0$ for some $v\in V(T_i)$,
then the product in \eqref{eq:tdef} can be non-zero only if $v$ is a sink in $T_i$,
i.e., a vertex with incoming edges only.
Since the tournament $T_i$ has no sink,
it follows that
\begin{equation}
t(T_i,W_k(s_{[\ell]},t_{1,[n_1]},\ldots,t_{\ell,[n_\ell]}))=
  \int\limits_{\left([0,1]\setminus I_0\right)^{V(T_i)}} \prod_{\overrightarrow{uv}\in E(T_i)} W_k(s_{[\ell]},t_{1,[n_1]},\ldots,t_{\ell,[n_\ell]})(x_u,x_v) \dd x_{V(T_i)}.
  \label{eq:setminus}
\end{equation}
Consider $x\in \left([0,1]\setminus I_0\right)^{V(T_i)}$ and
let $f_{x}:V(T_i)\to V(T_1\oplus\cdots\oplus T_{\ell})$ be defined as $f_{x}(v)=v_{j,j'}$ if $x_{v}\in I_{j,j'}$.
Further, let $H(T_i,T_1\oplus\cdots\oplus T_{\ell})$ is the set of all maps $f:V(T_i)\to V(T_1\oplus\cdots\oplus T_{\ell})$ such that
\begin{itemize}
\item the subtournament of $T_1\oplus\cdots\oplus T_{\ell}$ induced by $f^{-1}(v)$ is acyclic for every $v\in V(T_i)$,
      where $f^{-1}(v)$ denotes the preimage of a vertex $v$, and
\item for every edge $uv\in E(T_i)$, either $f(u)=f(v)$ or $f(u)f(v)$ is an edge of $T_1\oplus\cdots\oplus T_{\ell}$.
\end{itemize}
Observe the product in \eqref{eq:setminus} is non-zero for $x\in \left([0,1]\setminus I_0\right)^{V(T_i)}$
only if the function $f_{x}$ belongs to $H(T_i,T_1\oplus\cdots\oplus T_{\ell})$.
We obtain that
\begin{equation}
t(T_i,W_k(s_{[\ell]},t_{1,[n_1]},\ldots,t_{\ell,[n_\ell]}))=
  \sum_{f\in H(T_i,T_1\oplus\cdots\oplus T_{\ell})}
  \prod_{j\in [\ell]}\prod_{j'\in [n_j]}\frac{(s_jt_{j,j'})^{|f^{-1}(v_{j,j'})|}}{|f^{-1}(v_{j,j'})|!}
  \label{eq:H}
\end{equation}
where we set $0!$ to be equal to $1$.
It follows that $t(T_i,W_k(s_{[\ell]},t_{1,[n_1]},\ldots,t_{\ell,[n_\ell]}))$
is a homogeneous polynomial in the $s$-variables and $t$-variables and
the degree of each monomial in the $s$-variables is $n_i$ and in the $t$-variables is also $n_i$;
moreover, the degree of a variable $s_j$ in a monomial
is equal to the sum of the degrees of the variables $t_{j,1},\ldots,t_{j,n_j}$ in the monomial.

Our aim is to analyze the Jacobian matrix $\JJ$ of the function $F$ with respect to the $s$-variables,
i.e.,
\[\JJ_{i,j}=\frac{\partial }{\partial s_j}t(T_i,W_k(s_{[\ell]},t_{1,[n_1]},\ldots,t_{\ell,[n_\ell]}))\]
for $i,j\in [\ell]$.
Note that $\JJ_{i,j}$ is a homogeneous polynomial of degree $n_i-1$ in the $s$-variables and of degree $n_i$ in the $t$-variables.
It follows that the determinant $\det(\JJ)$ is a homogeneous polynomial of degree $N-\ell$ in the $s$-variables and
of degree $N$ in the $t$-variables (note that the total number of $t$-variables is $N$).
We will show that the polynomial $\det(\JJ)$ is non-zero,
which will be established by identifying a monomial of $\det(\JJ)$ with a non-zero coefficient.
As the first step towards this goal, we prove the next lemma.

\begin{lemma}
\label{lm:lem7}
Let $i\in [\ell]$,
let $X$ be any set containing $n_i$ distinct $t$-variables, and
let $Y$ be the set containing the $n_i$ vertices of the tournaments $T_1,\ldots,T_\ell$ that
correspond to the variables contained in $X$.
The polynomial $t(T_i,W_k(s_{[\ell]},t_{1,[n_1]},\ldots,t_{\ell,[n_\ell]}))$
has a monomial including the product of the variables from $X$ (with a non-zero coefficient) if and only if
the vertices contained in $Y$ induce a copy of $T_i$ in the tournament $T_1\oplus\cdots\oplus T_{\ell}$.
\end{lemma}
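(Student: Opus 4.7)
The plan is to analyze \eqref{eq:H} directly and identify which maps $f \in H(T_i, T_1\oplus\cdots\oplus T_\ell)$ contribute to monomials divisible by $\prod_{x\in X} x$.

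First, I would observe that each summand in \eqref{eq:H} is a monomial whose total $t$-degree equals $\sum_{j,j'} |f^{-1}(v_{j,j'})| = n_i$. Since $|X| = n_i$, a monomial divisible by $\prod_{x\in X} x$ must carry every variable of $X$ to exactly the first power and every other $t$-variable to degree zero. Reading off \eqref{eq:H}, this isolates precisely those $f$ with $|f^{-1}(v_{j,j'})| = 1$ for $v_{j,j'}\in Y$ and $|f^{-1}(v_{j,j'})| = 0$ otherwise; such $f$ are exactly the bijections from $V(T_i)$ onto $Y$.

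Next, I would unpack what it means for such a bijection $f$ to lie in $H(T_i, T_1\oplus\cdots\oplus T_\ell)$. The acyclicity condition on fibers is vacuous since all fibers are singletons, so only the edge condition remains, which, since $f$ is injective, requires $f(u)f(v)$ to be an edge of $T_1\oplus\cdots\oplus T_\ell$ whenever $\overrightarrow{uv}\in E(T_i)$. That is exactly the condition that $f$ is a tournament isomorphism from $T_i$ onto the subtournament of $T_1\oplus\cdots\oplus T_\ell$ induced by $Y$.

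Finally, every summand in \eqref{eq:H} is a product of non-negative quantities divided by positive factorials, so no cancellation between distinct maps $f$ can occur. The coefficient of $\prod_{x\in X} x$ in $t(T_i, W_k)$ is therefore a positive scalar multiple of $\prod_{v\in Y} s_{j(v)}$ times the number of such isomorphisms, where $j(v)$ denotes the index of the tournament containing $v$; it is non-zero iff at least one isomorphism $T_i \to T_1\oplus\cdots\oplus T_\ell[Y]$ exists, which is equivalent to $Y$ inducing a copy of $T_i$. The whole argument is a direct bookkeeping exercise on \eqref{eq:H}, and the only point needing a moment's care—ruling out cancellation across distinct $f$'s—is immediate from positivity, so I do not anticipate any substantial obstacle.
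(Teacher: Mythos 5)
Your proposal is correct and follows essentially the same route as the paper: both read off from \eqref{eq:H} that a monomial containing all of $X$ must come from an injective $f\in H(T_i,T_1\oplus\cdots\oplus T_\ell)$ mapping $V(T_i)$ onto $Y$, and that such injective maps are exactly embeddings of $T_i$ onto the subtournament induced by $Y$. Your explicit remark that positivity of all summands rules out cancellation is a point the paper leaves implicit, but it is the same argument.
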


\begin{proof}
Fix $i\in [\ell]$ and the sets $X$ and $Y$.
We inspect the sum in \eqref{eq:H} that
defines $t(T_i,W_k(s_{[\ell]},t_{1,[n_1]},\ldots,t_{\ell,[n_\ell]}))$.
A monomial in the sum in \eqref{eq:H} contains $n_i$ distinct $t$-variables
if and only if the function $f\in H(T_i,T_1\oplus\cdots\oplus T_{\ell})$ is injective.
It follows that any monomial including the product of the variables from $X$
arises from an injective function $f\in H(T_i,T_1\oplus\cdots\oplus T_{\ell})$ that
maps the vertices of $T_i$ to the set $Y$.
Since the injective functions contained in $H(T_i,T_1\oplus\cdots\oplus T_{\ell})$
are exactly injective homomorphisms from $T_i$ to $T_1\oplus\cdots\oplus T_{\ell}$,
the polynomial $t(T_i,W_k(s_{[\ell]},t_{1,[n_1]},\ldots,t_{\ell,[n_\ell]}))$
has a monomial including the product of the variables from $X$ with a non-zero coefficient if and only if
the set $Y$ induces a copy of $T_i$ in the tournament $T_1\oplus\cdots\oplus T_{\ell}$.
\end{proof}

To show that the polynomial $\det(\JJ)$ is non-zero,
we will also need the following lemma.

\begin{lemma}
\label{lm:decomp}
Let $Y_1,\ldots,Y_{\ell}$ be disjoint subsets of the vertices of the tournament $T_1\oplus\cdots\oplus T_{\ell}$.
If the subtournament induced by $Y_i$ in $T_1\oplus\cdots\oplus T_{\ell}$ is isomorphic to $T_i$ for every $i\in [\ell]$,
then $Y_i=V(T_i)$.
\end{lemma}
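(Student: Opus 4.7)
Set $T=T_1\oplus\cdots\oplus T_{\ell}$ and, for $X\subseteq V(T)$, let $T[X]$ denote the subtournament of $T$ induced on $X$. Let $C_{j,q}$ for $j\in[\ell]$ and $q\in[m_j]$ be the strongly connected components of $T_j$; since $T$ is the direct sum of $T_1,\ldots,T_{\ell}$, these $C_{j,q}$ (ordered lexicographically in $(j,q)$) are exactly the strongly connected components of $T$, and in particular $\sum_i m_i=\sum_j m_j$. The plan is to reduce Lemma~\ref{lm:decomp} to the uniqueness part of Proposition~\ref{prop:le1} by reinterpreting the disjoint sets $Y_1,\ldots,Y_{\ell}$ as a labeling of the letters of $w(T)=w(T_1)\cdots w(T_{\ell})$ and using that the coefficient of the concatenation in the shuffle product of distinct non-increasing Lyndon words is one.

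The first step is to show that every strongly connected component $C_{j,q}$ of $T$ is contained in exactly one $Y_i$. Since $T$ is a direct sum of its strongly connected components, for any $Y\subseteq V(T)$ the induced subtournament decomposes as $T[Y]=\bigoplus_{j,q}T[Y\cap C_{j,q}]$, so the number of strongly connected components of $T[Y]$ equals the sum over $(j,q)$ of the numbers of strongly connected components of the $T[Y\cap C_{j,q}]$. Applying this to each $Y_i$ and using $T[Y_i]\cong T_i$ yields
\[\sum_{i\in[\ell]}m_i \;=\; \sum_{j\in[\ell]}\sum_{q\in[m_j]}\sum_{i\in[\ell]}\sigma_i(j,q),\]
where $\sigma_i(j,q)$ is the number of strongly connected components of $T[Y_i\cap C_{j,q}]$. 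The sets $Y_i$ are disjoint and satisfy $\sum_i|Y_i|=\sum_i|V(T_i)|=|V(T)|$, so they partition $V(T)$; in particular every $C_{j,q}$ is met by at least one $Y_i$, hence $\sum_i\sigma_i(j,q)\ge 1$ for every $(j,q)$. Summing over $(j,q)$ yields the lower bound $\sum_{j,q}1=\sum_j m_j=\sum_i m_i$ on the right-hand side, so equality must hold at every $(j,q)$: exactly one $Y_i$ meets $C_{j,q}$, which by the partition property forces $C_{j,q}\subseteq Y_i$.

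The second step interprets the resulting assignment as a shuffle-product contribution. Let $\phi$ send each strongly connected component $C$ of $T$ to the unique $i\in[\ell]$ with $C\subseteq Y_i$. Reading the components of $T$ in their natural order produces $w(T)=w(T_1)\cdots w(T_{\ell})$, and the subword obtained by retaining only the components with $\phi(C)=i$ (in the same order) records the strongly connected components of $T[Y_i]$; since $T[Y_i]\cong T_i$, this subword equals $w(T_i)$. Thus $\phi$ witnesses the concatenation $w(T_1)\cdots w(T_{\ell})$ as a constituent of the shuffle product $w(T_1)\otimes_S\cdots\otimes_S w(T_{\ell})$. Because the members of $\TT^L_k$ are pairwise distinct, the words $w(T_1),\ldots,w(T_{\ell})$ are pairwise distinct Lyndon words listed in non-increasing lexicographic order, and Proposition~\ref{prop:le1} asserts that the coefficient of this constituent equals one. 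Hence $\phi$ must be the trivial labeling that sends each letter of $w(T_j)$ to $j$, which yields $Y_i=V(T_i)$ for every $i\in[\ell]$.

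The main difficulty is the first step: the hypothesis $T[Y_i]\cong T_i$ only controls the $Y_i$ up to isomorphism, whereas the conclusion $Y_i=V(T_i)$ is a rigid identification of vertex sets. The additivity of strongly-connected-component counts under direct sums, combined with the tight equality $\sum_i m_i=\sum_{j,q}1$, is what upgrades the loose isomorphism assumption into the geometric statement that each $C_{j,q}$ sits inside a single $Y_i$; once this is in hand, Proposition~\ref{prop:le1} finishes the argument, which is also why the non-increasing ordering of distinct Lyndon words is essential.
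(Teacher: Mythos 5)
Your proof is correct and follows essentially the same route as the paper: a counting argument on strongly connected components showing each component of $T_1\oplus\cdots\oplus T_{\ell}$ lies in a single $Y_i$, followed by the coefficient-one (uniqueness) part of Proposition~\ref{prop:le1} to force the trivial splitting of $w(T_1)\cdots w(T_{\ell})$. Your first step is just a more explicit double-counting rendering (via the quantities $\sigma_i(j,q)$ and the observation that the $Y_i$ partition the vertex set) of the paper's terser component-counting argument, so there is nothing substantively different to flag.
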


\begin{proof}
Let $m_i$ be the number of strongly connected components of the tournament $T_i$, $i\in [\ell]$.
Since the number of strongly connected components of the tournament $T_1\oplus\cdots\oplus T_{\ell}$
is $m_1+\cdots+m_{\ell}$, and
the vertices of a strongly connected component of the copy of $T_i$ induced by $Y_i$ in $T_1\oplus\cdots\oplus T_{\ell}$
must be contained in the same strongly connected component of $T_1\oplus\cdots\oplus T_{\ell}$,
the vertices of a strongly connected component of the copy of $T_i$ induced by $Y_i$
actually form a strongly connected component of $T_1\oplus\cdots\oplus T_{\ell}$.
For $i\in [\ell]$,
let $Z_i$ be the set of strongly connected components of $T_1\oplus\cdots\oplus T_{\ell}$
such that $Y_i$ is the union of the vertex sets of the components contained in $Z_i$.
Observe that the subword of the word $w(T_1\oplus\cdots\oplus T_{\ell})$
induced by the letters corresponding to the components contained in $Z_i$
is the word $w(T_i)$.
Since the word $w(T_1)\cdots w(T_{\ell})$
is the lexicographically largest constituent in the shuffle product $w(T_1)\otimes_S\cdots\otimes_S w(T_{\ell})$ and
its coefficient in the shuffle product is equal to one by Proposition~\ref{prop:le1},
it follows that there is a unique way to split the word $w(T_1)\cdots w(T_{\ell})$
into disjoint subwords $w(T_1),\ldots,w(T_{\ell})$.
Hence, $Z_i$ are the components of $T_i$ for every $i\in [\ell]$ (otherwise,
the way to split the word $w(T_1)\cdots w(T_{\ell})$ into disjoint subwords $w(T_1),\ldots,w(T_{\ell})$
would not be unique).
It follows that $Y_i=V(T_i)$ for every $i\in [\ell]$.
\end{proof}

We are now ready to prove the main theorem of this section.
While the value $k\ge 3$ is fixed throughout this section,
we include it in the statement of the theorem for easier reference to the theorem outside this section.

\begin{theorem}
\label{thm:lowth}
For every $k\ge 3$,
the determinant $\det(\JJ)$ is a non-zero polynomial.
In particular,
there exists a choice of the $s$-variables and the $t$-variables such that
the value of the determinant $\det(\JJ)$ is non-zero, and
so there exist $x_0\in [0,1]^{\TT^L_k}$ and $\varepsilon>0$ such that
for every $x\in B_{\varepsilon}(x_0)\subseteq [0,1]^{\TT^L_k}$,
there exists a choice of the $s$-variables and the $t$-variables such that
\[t\left(W_k(s_{[\ell]},t_{1,[n_1]},\ldots,t_{\ell,[n_\ell]})\right)_{T\in\TT^L_k}=x.\]
\end{theorem}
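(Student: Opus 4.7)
The plan is to show that $\det(\JJ)$ is not the zero polynomial by exhibiting a single monomial in it with a non-zero coefficient; once this is established, the second half of the statement will follow from a standard application of the inverse function theorem to the map $s_{[\ell]}\mapsto F(s_{[\ell]},t^0)$ obtained by freezing the $t$-variables at a suitable value $t^0$.

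The candidate monomial I would track is
\[ M \;=\; \prod_{i\in[\ell]}\biggl(s_i^{\,n_i-1}\prod_{j\in[n_i]}t_{i,j}\biggr), \]
whose bidegree $(N-\ell,N)$ matches that of $\det(\JJ)$. Writing $\det(\JJ)=\sum_\sigma\sgn(\sigma)\prod_i\JJ_{i,\sigma(i)}$ and expanding each factor via \eqref{eq:H}, I would identify which pairs, consisting of a permutation $\sigma$ together with a choice of a monomial $m_i$ from each $\JJ_{i,\sigma(i)}$, satisfy $\prod_i m_i=M$. The crucial combinatorial observation is that, because each of the $N$ distinct $t$-variables appears exactly once in $M$ while the total $t$-degree of $\prod_i m_i$ is $\sum_i n_i=N$, each chosen summand in $\JJ_{i,\sigma(i)}$ must come from an \emph{injective} $f_i\in H(T_i,T_1\oplus\cdots\oplus T_\ell)$, and the images $Y_i:=\mathrm{Im}(f_i)$ must partition the vertex set of $T_1\oplus\cdots\oplus T_\ell$ into blocks each of which induces a copy of the corresponding $T_i$.

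At this point Lemma~\ref{lm:decomp} becomes the decisive tool: it forces $Y_i=V(T_i)$ for every $i\in[\ell]$. Consequently the only $s$-variable occurring in the chosen summand of $\JJ_{i,\sigma(i)}$ (before its derivative is applied) is $s_i$, so $\partial/\partial s_{\sigma(i)}$ annihilates the summand unless $\sigma(i)=i$. Only $\sigma=\mathrm{id}$ contributes, and its contribution to the coefficient of $M$ equals $\prod_i n_i$ multiplied by the coefficient of $s_i^{n_i}\prod_j t_{i,j}$ in $t(T_i,W_k)$; the latter is strictly positive by Lemma~\ref{lm:lem7} applied with $X=\{t_{i,1},\ldots,t_{i,n_i}\}$, since $V(T_i)$ trivially induces $T_i$ in $T_1\oplus\cdots\oplus T_\ell$. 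Thus the coefficient of $M$ in $\det(\JJ)$ is a product of positive factors, and $\det(\JJ)\not\equiv 0$.

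For the remainder of the statement, I would pick an interior point $(s^0,t^0)$ of the admissible parameter region at which $\det(\JJ)\neq 0$; such a point exists because the admissible region contains a non-empty open subset of $\RR^{\ell+N}$, on which a non-zero polynomial cannot vanish identically. Freezing $t=t^0$, the map $s_{[\ell]}\mapsto F(s_{[\ell]},t^0)$ is a smooth map between open subsets of $\RR^\ell$ with non-singular Jacobian at $s^0$, so the inverse function theorem supplies $\varepsilon>0$ and a neighbourhood $B_\varepsilon(x_0)$ of $x_0:=F(s^0,t^0)$ such that every $x\in B_\varepsilon(x_0)$ equals $F(s,t^0)$ for some $s$ close to $s^0$ still inside the admissible region; this yields the required tournamenton. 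The main obstacle is the combinatorial pin-down in the middle two paragraphs: ruling out all ``shuffled'' injective embeddings of the $T_i$'s into $T_1\oplus\cdots\oplus T_\ell$ is precisely what Lemma~\ref{lm:decomp} (ultimately, the uniqueness part of Proposition~\ref{prop:le1}) is designed to accomplish, and without it the non-identity permutations $\sigma$ would contribute competing monomials whose cancellations would be hard to control.
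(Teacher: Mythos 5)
Your proposal is correct and follows essentially the same route as the paper: the same target monomial $\prod_i s_i^{n_i-1}\prod_j t_{i,j}$, injectivity forced by the $t$-degree count, Lemma~\ref{lm:lem7} and Lemma~\ref{lm:decomp} to pin down $Y_i=V(T_i)$ and kill all non-identity permutations, and the Inverse Function Theorem with frozen $t$-variables for the final claim. Your bookkeeping of the coefficient (a factor $n_i$ from differentiating $s_i^{n_i}$ times the number of automorphisms of $T_i$) is, if anything, slightly more explicit than the paper's, and in both cases the coefficient is visibly non-zero.
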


\begin{proof}
Recall that $\ell$ is the number of non-trivial Lyndon tournaments with at most $k$ vertices.
We first prove that $\det(\JJ)$ is a non-zero polynomial.
To do so, we show that the determinant $\det(\JJ)$ contains the monomial
\[\prod_{i\in [\ell]}s_i^{n_i-1}\prod_{j\in [n_i]}t_{i,j}\]
with a non-zero coefficient.
Recall that
\[\det(\JJ)=\sum_{\pi\in S_\ell}\sgn(\pi)\prod_{i\in [\ell]}\JJ_{i,\pi(i)}\]
is a homogeneous polynomial of degree $N-\ell$ in the $s$-variables and
of degree $N$ in the $t$-variables (note that the number of $t$-variables is $N$).
Let $\pi$ be a permutation such that $\prod\JJ_{i,\pi(i)}$
has a monomial containing each of the $N$ $t$-variables.
This monomial is a product of some monomials in $\JJ_{i,\pi(i)}$ and
let $X_i$ be the $n_i$ $t$-variables contained in the corresponding monomial in $\JJ_{i,\pi(i)}$.
Note that the sets $X_1,\ldots,X_{\ell}$ partition the set of $t$-variables.
Further,
let $Y_i$ be the vertices of the tournament $T_1\oplus\cdots\oplus T_{\ell}$ 
corresponding to the $t$-variables contained in $X_i$, $i\in [\ell]$.
Since the sets $X_1,\ldots,X_{\ell}$ are disjoint,
the sets $Y_1,\ldots,Y_{\ell}$ are also disjoint and so partition the vertex set of $T_1\oplus\cdots\oplus T_{\ell}$.

Since $\JJ_{i,\pi(i)}$
is the partial derivative of the polynomial $t(T_i,W_k(s_{[\ell]},t_{1,[n_1]},\ldots,t_{\ell,[n_\ell]}))$ with respect to $s_{\pi(i)}$,
the polynomial $t(T_i,W_k(s_{[\ell]},t_{1,[n_1]},\ldots,t_{\ell,[n_\ell]}))$
has a monomial including the product of the variables from $X_i$.
Lemma~\ref{lm:lem7} implies that the set $Y_i$ induces a copy of $T_i$ in the tournament $T_1\oplus\cdots\oplus T_{\ell}$.
Since the sets $Y_1,\ldots,Y_{\ell}$ are disjoint subsets of the vertices of the tournament $T_1\oplus\cdots\oplus T_{\ell}$,
Lemma~\ref{lm:decomp} yields that $Y_i=V(T_i)$.
Hence, $X_i=\{t_{i,1},\ldots,t_{i,n_i}\}$ and
the monomial of $t(T_i,W_k(s_{[\ell]},t_{1,[n_1]},\ldots,t_{\ell,[n_\ell]}))$ that
yields the monomial of $\JJ_{i,\pi(i)}$ containing the product of the variables from $T_i$
is $s_i^{n_i}t_{i,1}\cdots t_{i,n_i}$.
It follows that $\pi(i)=i$ (otherwise, the partial derivative of the monomial with respect to $s_{\pi(i)}$ would be zero).
We conclude that the polynomial $\det(\JJ)$ has a single monomial containing all $t$-variables,
this monomial is 
\[\prod_{i\in [\ell]}s_i^{n_i-1}\prod_{j\in [n_i]}t_{i,j},\]
and the coefficient at this monomial is the product of the sizes of the automorphism groups of $T_1,\ldots,T_{\ell}$.
In particular, the coefficient is non-zero.

It remains to verify
the existence of $x_0\in [0,1]^{\TT^L_k}$ and $\varepsilon>0$ with the properties given in the statement of the theorem.
Since $\det(\JJ)$ is a non-zero polynomial,
there exist positive reals $s_i$, $i\in [\ell]$, and $t_{i,j}$, $i\in [\ell]$ and $j\in [n_i]$, such that
$\det(\JJ)$ is non-zero and
\[\sum_{i\in [\ell]}s_i\sum_{j\in [n_i]}t_{i,j}<1.\]
Next define a function $G:[0,1]^{\ell}\to [0,1]^{\ell}$ as
\[G(z_1,\ldots,z_{\ell})=F(z_1,\ldots,z_{\ell},t_{1,[n_1]},\ldots,t_{\ell,[n_\ell]})\]
where $F:[0,1]^{\ell+N}\to [0,1]^{\ell}$ is the function defined in the beginning of this section, and
set $x_0=F(s_{[\ell]},t_{1,[n_1]},\ldots,t_{\ell,[n_\ell]})$.
Note that $(x_0)_{T_i}$
is the density of the tournament $T_i$ in the tournamenton $W_k(s_{[\ell]},t_{1,[n_1]},\ldots,t_{\ell,[n_\ell]})$.
Since the Jacobian matrix of $G$ for $z_{[\ell]}=s_{[\ell]}$ is exactly the matrix $\JJ$ and
the determinant of the Jacobian matrix of $G$ is a polynomial in $z_{[\ell]}$ (as the function $G$
itself is a polynomial in $z_{[\ell]}$),
there exists $\delta>0$ such that
the Jacobian matrix of $G$ for every $z\in B_{\delta}(s_{[\ell]})$ is invertible and
\[\sum_{i\in [\ell]}z_i\sum_{j\in [n_i]}t_{i,j}<1\]
for every $z\in B_{\delta}(s_{[\ell]})$,
i.e., the tournamenton $W_k(z_{[\ell]},t_{1,[n_1]},\ldots,t_{\ell,[n_\ell]})$ is well-defined.
The Inverse Function Theorem implies the existence of $\varepsilon>0$ that
for every $x\in B_{\varepsilon}(x_0)$,
there exists (unique) $z\in B_{\delta}(s_{[\ell]})$ such that $G(z)=x$.
Since the density of the tournament $T_i$ in the tournamenton $W_k(z_{[\ell]},t_{1,[n_1]},\ldots,t_{\ell,[n_\ell]})$ is $G(z)_i$,
the statement of the theorem now follows.
\end{proof}

\section*{Acknowledgement}

The authors would like to thank the anonymous reviewer for their detailed comments on the arguments presented in the paper and
particularly for pointing out that Figure~\ref{fig:W4} in the originally submitted version contained a wrong tournamenton.

\bibliographystyle{bibstyle}
\bibliography{tourn-dim}

\end{document}